\documentclass[11pt, a4paper, twoside,reqno]{amsart}
\usepackage{amsmath,amsthm,amsfonts,amssymb}
\usepackage[utf8]{inputenc}
\usepackage{mathrsfs}
\usepackage{graphics,graphicx}
\usepackage{mathabx}
\usepackage[dvipsnames]{xcolor}
\usepackage{enumerate}
\usepackage{icomma}
\usepackage{empheq}
\usepackage{pdflscape}
\usepackage{epstopdf}
\usepackage{scalerel}
\usepackage{rotating}
\usepackage[shortlabels]{enumitem}
\usepackage[bookmarks=true,breaklinks=true,bookmarksnumbered = true,colorlinks=false,hidelinks]{hyperref}
\usepackage{tikz}

\newcommand{\vstack}{
  \mathbin{
    \tikz[baseline=-0.582ex]
      \draw[line width=.1ex, line cap=round]
        (0,0) circle (.75ex)
        (0,-.5ex) -- (0,.5ex);
  }
}

\newcommand{\subvstack}{
  \mathbin{
    \tikz[baseline=-0.449ex]
      \draw[line width=.075ex,line cap=round]
        (0,0) circle (.5625ex)
        (0,-.375ex)--(0,.375ex);
  }
}

\newcommand{\hstack}{
  \mathbin{
    \tikz[baseline=-0.582ex]
      \draw[line width=.1ex, line cap=round]
        (0,0) circle (.75ex)
        (-.5ex,0) -- (.5ex,0);
  }
}
\newcommand{\subhstack}{
  \mathbin{
    \tikz[baseline=-0.449ex]
      \draw[line width=.075ex,line cap=round]
        (0,0) circle (.5625ex)
        (-.375ex,0)--(.375ex,0);
  }
}
\definecolor{arrow}{RGB}{154, 32, 64}

\theoremstyle{plain}
\newtheorem*{thm0}{Theorem}
\newtheorem*{ques}{Question}

\newtheorem{thm}{Theorem}[section]

\newtheorem{lem}[thm]{Lemma}  
\newtheorem{prop}[thm]{Proposition} 
\newtheorem{cor}[thm]{Corollary} 

\theoremstyle{definition}
\newtheorem{defn}[thm]{Definition} 
\newtheorem{ex}[thm]{Example}

\theoremstyle{remark}
\newtheorem{rmq}[thm]{Remark}

\newtheorem{conv}[thm]{Convention}

\newsavebox{\mybox}
\usepackage{graphicx} 

\begin{document}
\title{A welded extension of \textit{4-}equivalence}
\author{Emmanuel Graff}
\subjclass{57K12 (primary), 57K10 (secondary).}
\keywords{4-move, Welded knot, Quanles, Arrow Calculus, Algebraic Tangle.}

\begin{abstract}
We introduce a local move defining a welded extension of \textit{4-}equivalence. It induces the same quandle relations as \textit{4-}equivalence, giving a common algebraic framework for both the classical and welded settings. We also develop a version of arrow calculus adapted to this welded extension and apply it to the study of virtual tangles. This leads to a finite classification of algebraic virtual tangles without closed components under the resulting equivalence relation. As an application, we prove that the closure of any such tangle can be reduced to the unknot, giving a partial positive answer to a welded analogue of Nakanishi's conjecture.
\end{abstract}

\maketitle

\section*{Introduction}

A \emph{4-move} is a local operation in knot theory replacing two consecutive crossings with their opposite, as illustrated in Figure~\ref{fig4move}.

\begin{figure}[!htbp]
\centering
\includegraphics{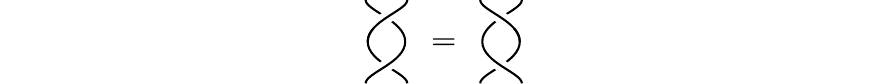}
\caption{A \textit{4-}move.}
\label{fig4move}
\end{figure}

The equivalence relation generated by \textit{4-}moves and ambient isotopies is called \emph{\textit{4-}equivalence}.

In 1979, Nakanishi conjectured that every knot is \textit{4-}equivalent to the trivial knot. This is no longer true for links, the Hopf link being a classical counterexample. In 1985, Kawauchi proposed a link analogue of the conjecture, asserting that any two link-homotopic links are \textit{4-}equivalent.

Both conjectures appear in~\cite{KirbyProbLowDimTop}. While Kawauchi's conjecture is now known to be false~\cite{DabkowskiPrzytyckiUnexpConnBetweenBurnsideGrpsKnotTheory}, Nakanishi's original conjecture remains open. It has been verified for the following families of knots:
\begin{itemize}
\item closures of algebraic tangles (in Conway's sense)~\cite{PrzytyckiThreTtalksCuautitlanGeneralTitle};
\item knots with crossing number at most $12$~\cite{DabkowskiJabanKhanSahi4MoveEqKnotLinkTwoComp};
\item closures of $3$-strand braids~\cite{PrzytyckiSlavikJablanwork4moves}.
\end{itemize}
For a general overview of the subject, we refer to the survey~\cite{PrzytyckiSlavikJablanwork4moves}.

In this paper, we study an analogue of \textit{4-}equivalence in the setting of \emph{welded knot theory}, introduced in~\cite{FennRimanyiRourkeBraidPermuGrp}. Welded knot theory is a quotient of \emph{virtual knot theory}, a diagrammatic extension of classical knot theory allowing virtual crossings in addition to classical ones.
 
We first introduce a new local move, called the \textit{wink-move}, and the associated equivalence relation, called \textit{wink-equivalence}, generated by wink-moves together with welded isotopy. Although the wink-move is not a direct geometric analogue of the classical \textit{4-}move, it still provides a natural welded extension of classical \textit{4-}equivalence. Indeed, \textit{4-}equivalence implies wink-equivalence (Proposition~\ref{prop4implywink}), as the \textit{4-}move can be realized by wink-moves and welded isotopy. Moreover, the wink-move induces the same relations on quandle colorings as the classical \textit{4-}move. This motivates the notion of a \emph{\textit{4-}coherent quandle}, which provides a common algebraic framework for both the classical and welded settings. In particular, the invariants introduced in~\cite{DabkowskiPrzytyckiUnexpConnBetweenBurnsideGrpsKnotTheory,DabkowskiSahiInv4moves,BrittenhamHermillerTodd4movesDaboSahiInvKnot} remain invariant under wink-equivalence.

Inspired by Nakanishi's conjecture, we ask the following question.

\begin{ques}
Is every welded knot wink-equivalent to the unknot?
\end{ques}

As in the classical setting, our main result gives a positive answer for knots obtained as closures of \emph{algebraic virtual tangles}.

\begin{thm0}[Corollary~\ref{coroalgknotwinktriv}]
Every knot obtained as the closure of an algebraic virtual tangle is wink-equivalent to the unknot.
\end{thm0}

The proof relies on a version of \emph{arrow calculus}~\cite{JBYasuArrowcalc}, the welded counterpart of \emph{clasper calculus}~\cite{HabiroClasp}, adapted to wink-equivalence. We establish several basic relations in this setting and use them to obtain a finite classification of algebraic virtual tangles without closed components up to wink-equivalence. The theorem above follows from this classification.

\bigskip

The paper is organized as follows. Section~\ref{sectionwink} introduces wink-equivalence, discusses its relation with quandle colorings, and develops the corresponding arrow calculus. In Section~\ref{sectiontangle}, we prove the main theorem by classifying algebraic virtual tangles without closed components up to wink-equivalence.

\medskip\noindent\emph{Acknowledgements.}
The author is supported by JSPS KAKENHI Grant Number JP24KF0200. The author thanks Kazuo Habiro, Jean--Baptiste Meilhan and Akira Yasuhara for their careful reading and helpful comments.

\section{Wink-equivalence}\label{sectionwink}

\subsection{Wink-equivalence}

We start with the basic definitions.

\begin{defn}\label{defdiagram}
A \emph{\textbf{virtual diagram}} is the image of a proper smooth immersion of a one-dimensional manifold (a disjoint union of circles and intervals) into a disk, where each transverse double point is labelled as either a classical or a virtual crossing, as shown in Figure~\ref{figcrossings}. Here, \emph{proper} means that the boundary of the $1$-manifold is mapped to the boundary of the disk.

\begin{figure}[!htbp]
    \centering
    \includegraphics{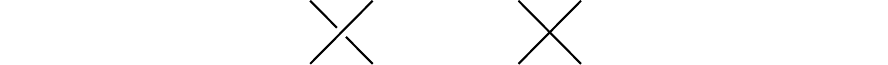}
    \caption{A classical and a virtual crossing.}
    \label{figcrossings}
\end{figure}
\end{defn}

In what follows, we will often simply say \lq diagram\rq\, instead of virtual diagram. A diagram without virtual crossings is called \emph{classical}.

\begin{defn}\label{defwink}
The equivalence relation of \emph{\textbf{welded isotopy}} on diagrams is generated by ambient isotopies fixing the boundary pointwise and the following local moves:
\begin{itemize}
    \item classical Reidemeister moves,
    \[\includegraphics{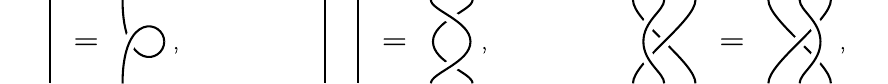}\]

    \item virtual Reidemeister moves,
    \[\includegraphics{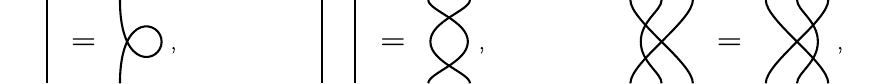}\]

    \item mixed Reidemeister move, $\hspace{4cm}\bullet$ (OC)-move,
    \[\includegraphics{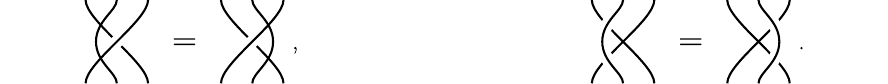}\]
\end{itemize}

The equivalence relation of \emph{\textbf{wink-equivalence}}  on diagrams is generated by welded isotopy and one of the following moves:
\begin{itemize}
    \item wink-moves,
    \[\includegraphics{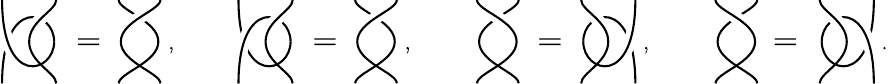}\]
\end{itemize}

Any version of the wink-move generates the same equivalence relation. The first two versions are related by adding a classical crossing to the left of the move, while the last two are obtained from the first two by adding a virtual crossing to the right.
\end{defn}

Although the wink-move does not look like the classical \textit{4-}move, it is a natural welded analogue for two reasons. First, Proposition~\ref{prop4implywink} shows that the \textit{4-}move can be realized by wink-equivalence, so wink-equivalence extends classical \textit{4-}equivalence. Second, as we will see in the next subsection, the wink-move induces the same relations on quandle colorings as the classical \textit{4-}move. The name is inspired by the appearance of the move: its two sides resemble a pair of eyes, one open and one closed.

\begin{prop}\label{prop4implywink}
\textit{4-}equivalence implies wink-equivalence.
\end{prop}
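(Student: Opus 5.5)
The plan is to realize a single \textit{4-}move as a finite sequence of wink-moves and welded isotopies, all supported in the disk where the \textit{4-}move takes place. Since \textit{4-}equivalence is generated by \textit{4-}moves and ambient isotopy, and ambient isotopy of classical diagrams is a special case of welded isotopy, this proves the proposition.

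Work in the two-strand braid picture. A \textit{4-}move replaces $\sigma^2$ by $\sigma^{-2}$, where $\sigma$ denotes a classical crossing between the two strands. We may use whichever version of the wink-move in Definition~\ref{defwink} is convenient, since they all generate the same relation. Let $\tau$ denote the virtual crossing between the two strands, so that $\tau^2=1$ by the second virtual Reidemeister move. The key steps are as follows.

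\begin{enumerate}
\item Use virtual Reidemeister II to insert a pair $\tau\tau$ between the two classical crossings, rewriting $\sigma^2$ as $\sigma\tau\,\tau\sigma$.
\item Apply one version of the wink-move to the block $\sigma\tau$, turning it into a word involving $\sigma^{-1}$ and virtual crossings.
\item Apply another version, for instance the one obtained by adding a virtual crossing on the right, to the block $\tau\sigma$.
\item Use classical Reidemeister II to cancel any leftover pairs $\sigma\sigma^{-1}$, and virtual Reidemeister II to cancel any leftover pairs $\tau\tau$.
\end{enumerate}

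The target is the word $\sigma^{-1}\sigma^{-1}$. If the wink-move already relates $\sigma$ to $\sigma^{-1}$ up to a virtual crossing, say $\sigma \sim \tau\sigma^{-1}\tau$ or $\sigma\tau\sim\sigma^{-1}$ (up to the normalizations stated after Definition~\ref{defwink}), then two applications give $\sigma^2\sim\sigma^{-2}$ directly. Otherwise one uses the fact that adding a classical crossing to the left converts between versions: this lets the move be transported along the twist region, one crossing at a time.

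The main obstacle is matching orientations and crossing signs. The \textit{4-}move must be realized for both the parallel and the antiparallel orientation of the two strands. The wink-moves, being welded moves, may not be symmetric under reversing a strand or under mirror images. I would first treat the parallel case. For the antiparallel case, I would conjugate by virtual crossings and use the mixed Reidemeister move to slide a virtual crossing past a classical one. If needed, I would also use the (OC)-move, which is available in the welded setting, to commute the resulting classical crossings into position. As a consistency check, the sequence should respect the quandle relation that the paper states the wink-move shares with the \textit{4-}move. This confirms the plan is algebraically possible, even though it does not by itself certify the diagrammatic sequence.
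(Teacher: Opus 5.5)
\paragraph{The gap.} Your outline never pins down what a wink-move does, and the mechanism it relies on cannot work.

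Both of your concrete guesses are excluded by the paper's own data:
\begin{itemize}
\item $\sigma\tau\sim\sigma^{-1}$ would change which endpoints are joined.
\item $\sigma\sim\tau\sigma^{-1}\tau$ would force $x\rhd y=x\rhd^{-1}y$. This fails in the $4$-coherent quandle of Example~\ref{exquandleFQ2}, where $a+a^{-1}=(1+a)^2\neq 0$. Moreover, two applications of it give $\tau\sigma^{-2}\tau$, not $\sigma^{-2}$.
\end{itemize}

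More fundamentally, the wink-move is not a relation between two-strand braid words. Its arrow counterpart removes a $w$-arrow \emph{enclosing} the head of another one, i.e.\ a $w$-arrow with both endpoints on the same strand, on either side of that head. Correspondingly, in $z=(x\rhd^{-1}y)\rhd^{\pm1}x$ the last operation is by the colour $x$ that the same strand carried earlier. So one side of the move contains a crossing of a strand with itself, and it is not welded isotopic to any virtual braid word: the meridians at one of its ends do not generate its Wirtinger group, whereas for a welded braid they do. Hence steps~2 and~3 cannot be carried out, since a single wink-move cannot rewrite the block $\sigma\tau$ or $\tau\sigma$ as a braid word in $\sigma^{-1}$ and $\tau$. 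Your fallback of ``transporting the move along the twist region'' is not a concrete sequence either.

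\paragraph{What the paper does.} Your first step, inserting a virtual Reidemeister~II pair between the two crossings, matches the paper. The missing idea is what comes after the first wink-move, and it is exactly the ingredient you treat as optional. In the paper, one wink-move is followed by a welded isotopy \emph{containing an (OC)-move}, and the remaining equalities follow by symmetry. The (OC)-move involves three strand segments, so it cannot even occur in a two-strand braid picture. It is also indispensable: the paper stresses that the argument has no virtual analogue.

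\paragraph{A clean way to see it.} Your rewriting $\sigma\tau\,\tau\sigma$ exhibits $\sigma^{2}$ as two parallel $w$-arrows of opposite orientations. Likewise $\sigma^{-2}=\sigma^{-1}\tau\,\tau\sigma^{-1}$ is the same pair in the opposite order, with one twist on each. So the $4$-move is precisely Lemma~\ref{lemarrowcommu}, whose proof uses Arrow wink-move, Isolated arrow, Slide, Tails Exchange and Arrow inverse, and does not depend on this proposition. Tails Exchange is the arrow counterpart of (OC).

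\paragraph{Orientations.} No parallel/antiparallel case split is needed. The paper works with unoriented diagrams, and both the $4$-move and the wink-move are unoriented local moves.
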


\begin{proof}
It suffices to show that wink-equivalence realizes the \textit{4-}move, as illustrated in Figure~\ref{figwelded4imply4}.

\begin{figure}[!htbp]
\centering
\includegraphics{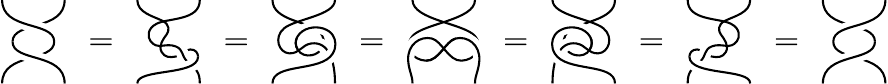}
\caption{A wink-equivalence realizing a \textit{4-}move.}
\label{figwelded4imply4}
\end{figure}

The first equality is a virtual Reidemeister~II move. The second is a wink-move, and the third follows from a welded isotopy containing an (OC)-move. The remaining equalities are deduced from the first three by symmetry
\end{proof}

Note that the proof of Proposition~\ref{prop4implywink} relies on an (OC)-move, which is forbidden in the virtual setting. Consequently, the wink-move does not lead to a virtual analogue of \textit{4-}equivalence but is instead specific to the welded setting.

\subsection{Quandle and \textit{4-}coherence}

We now study the interaction between \textit{4-}equivalence, wink-equivalence, and \emph{quandle colorings}. This naturally leads to the notion a \emph{\textit{4-}coherent} quandle.
\begin{defn}\label{defquandle}
A \emph{\textbf{quandle}} is a nonempty set \(Q\) equipped with two binary operations
\[
\rhd,\ \rhd^{-1} : Q \times Q \to Q
\]
such that, for all \(x,y,z\in Q\), the following hold:
\begin{enumerate}
    \item \(x \rhd x = x\);
    
    \item \((x \rhd y)\rhd^{-1} y = x = (x \rhd^{-1} y)\rhd y\);
    
    \item \((x \rhd y)\rhd z = (x \rhd z)\rhd (y \rhd z)\).
\end{enumerate}

\noindent A quandle is said to be \(\mathbf{4}\)\textbf{-coherent} if, for all \(x,y\in Q\), it satisfies the additional relations
\begin{enumerate}
    \item[(4)] \((x \rhd y)\rhd x = (x \rhd y)\rhd^{-1} x = x \rhd^{-1} y\).
\end{enumerate}
\end{defn}

A basic example is the following.

\begin{ex}\label{exquandleFQ2}
The algebra
\[
\mathbf{Z}_2[a]\big/(a+1)^3,
\]
equipped with the binary operations \(\rhd\) and \(\rhd^{-1}\) defined by
\[
x \rhd y = ax + (1+a)y,
\]
and
\[
x \rhd^{-1} y
   = a^{-1}x + (1+a^{-1})y
   = (1+a+a^2)x + (a+a^2)y,
\]
is a \(4\)-coherent quandle. In fact, one may verify that this quandle is the free \(4\)-coherent quandle on two generators.
\end{ex}

Recall the notion of a \emph{quandle coloring} of a diagram, where an \emph{arc} is a connected component of the diagram minus its crossings.

\begin{defn}\label{defcolor}
A \emph{\textbf{coloring}} of a diagram by a quandle is an assignment of a quandle element to each arc such that, for a chosen orientation of the diagram, the colors at every crossing satisfy the rules shown in Figure~\ref{Figcoloredcrossing}.

\begin{figure}[!htbp]
\centering
\includegraphics{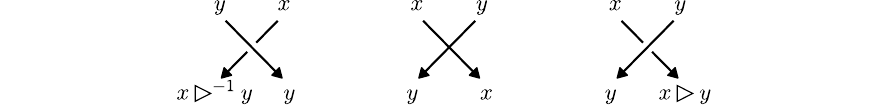}
\caption{Quandle coloring rules at crossings.}
\label{Figcoloredcrossing}
\end{figure}
\end{defn}

If two diagrams are related by one of the welded isotopy local moves from Definition~\ref{defwink}, then any coloring of one diagram induces a unique coloring of the other, without changing the colors of the arcs outside the region where the move is performed. The quandle axioms are exactly the relations needed for colorings to be compatible with the local moves generating welded isotopy. This is standard in both classical and welded knot theory; see, for instance, \cite{JoyceClassiInvKnotsQuandle} and \cite[\S8]{LiLeiWuUnknottingNumberWeldedKnot}. 

We now determine the additional relations imposed by a \textit{4-}move.

\begin{figure}[!htbp]
\centering
\includegraphics{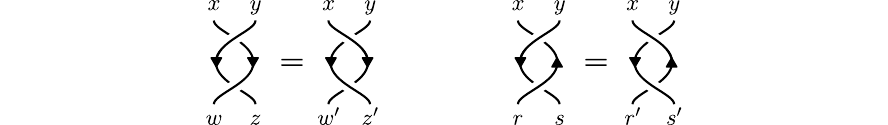}
\caption{Colorings of a \textit{4-}move.}
\label{Fig4movecolor}
\end{figure}

With the notation of Figure~\ref{Fig4movecolor}, an application of the coloring rules gives
\[
\left\{
\begin{aligned}
w  &= x \rhd y,\\
w' &= (x \rhd^{-1} y)\rhd^{-1} x,
\end{aligned}
\right.
\qquad
\left\{
\begin{aligned}
z  &= (y \rhd x)\rhd y,\\
z' &= y \rhd^{-1} x,
\end{aligned}
\right.
\qquad
\left\{
\begin{aligned}
r  &= x \rhd y,\\
r' &= (x \rhd^{-1} y)\rhd x,
\end{aligned}
\right.
\qquad
\left\{
\begin{aligned}
s  &= (y \rhd^{-1} x)\rhd y,\\
s' &= y \rhd x.
\end{aligned}
\right.
\]
Hence, a coloring extends across the \textit{4-}move if and only if the defining relations of a \textit{4-}coherent quandle are satisfied. Depending on the orientation, this amounts to either
\[
w=w'
\quad\text{and}\quad
z=z',
\]
or
\[
r=r'
\quad\text{and}\quad
s=s'.
\]

An analogous computation applies to the wink-move, shown in Figure~\ref{Figwelded4movecolor}.
\begin{figure}[!htbp]
\centering
\includegraphics{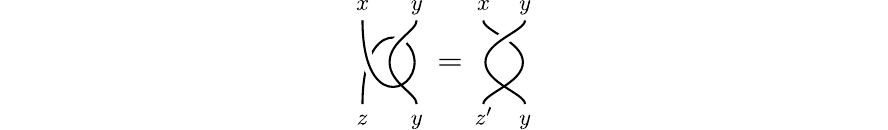}
\caption{Colorings of a wink-move.}
\label{Figwelded4movecolor}
\end{figure}

Depending on the orientation of the strands, one of the following identities holds:
\[
z=(x \rhd^{-1}y)\rhd^{\pm1}x,
\qquad
z'=x \rhd y,
\]
or
\[
z=(x \rhd y)\rhd^{\pm1}x,
\qquad
z'=x \rhd^{-1}y.
\]
Thus, a coloring is preserved by the wink-move precisely when $z = z'$, which is again equivalent to the defining relations of a \textit{4-}coherent quandle. Consequently, the wink-move induces the same relations on quandle colorings as the classical \textit{4-}move, making it a natural welded analogue from the viewpoint of quandle theory.

\subsection{Arrow calculus and wink-equivalence}
The main tool used in this paper is the \emph{arrow calculus} introduced in~\cite{JBYasuArrowcalc}. We adapt it here to wink-equivalence.

\begin{defn}\cite[Def.~3.1]{JBYasuArrowcalc}\label{defwarrow}
A collection of \emph{$\textbf{w}$\textbf{-arrows}} on a non-empty virtual diagram is a finite family of images of smoothly immersed intervals satisfying the following conditions:
\begin{itemize}
    \item Each $w$-arrow is oriented from one endpoint, called the \emph{\textbf{tail}}, to the other, called the \emph{\textbf{head}}. These endpoints are pairwise distinct and lie in the interior of the arcs of the diagram.
    \item The remaining singularities involving $w$-arrows consist of finitely many transverse double points, each labelled as a virtual crossing.
    \item Each $w$-arrow carries a finite number (possibly zero) of decorations, called \emph{\textbf{twists}}, disjoint from all singularities. By convention, two twists on the same edge cancel each other.
\end{itemize}
\end{defn}

In the figures, portions of the diagram are drawn in black and $w$-arrows in red. Heads are represented by arrows~\includegraphics{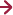}, and twists by red dots~\includegraphics{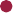}. We also use red dots with white interiors~\includegraphics{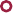} to indicate that a twist \textit{may or may} not be present on a given edge.

Given a diagram equipped with $w$-arrows, one may perform, surgery defined in~\cite{JBYasuArrowcalc}, to obtain a new diagram. Since we work with unoriented diagrams, we use the convention\textsuperscript{\hyperref[fn:surgery-convention]{\ref*{fn:surgery-convention}}} shown in Figure~\ref{arrowsurgery}, which differs slightly from that of~\cite{JBYasuArrowcalc}. Surgery on a collection of $w$-arrows is obtained by applying this operation to each $w$-arrow simultaneously.

\begin{figure}[!htbp]
\centering
\includegraphics{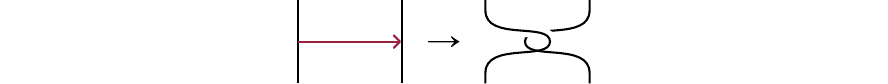}
\caption{Surgery on a $w$-arrow.}
\label{arrowsurgery}
\end{figure}

If a $w$-arrow contains twists, surgery introduces virtual crossings according to the convention shown on the left-hand side of Figure~\ref{arrowsurgery2}. Similarly, intersections between a $w$-arrow and either the diagram or another $w$-arrow produce virtual crossings according to the conventions shown in the middle and right-hand sides of Figure~\ref{arrowsurgery2}.
\begin{figure}[!htbp]
    \centering
    \includegraphics{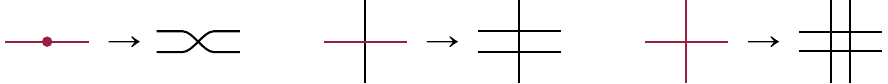}
    \caption{Surgeries near a twist and crossings.}
    \label{arrowsurgery2}
\end{figure}

The remainder of this section develops an \emph{\textbf{arrow calculus}} up to wink-equivalence, namely a collection of operations on diagrams equipped with $w$-arrows that preserve the wink-equivalence class of the surgery.

We first recall the \emph{\textbf{arrow moves}} from~\cite[\S4.3]{JBYasuArrowcalc}, which encode welded isotopy.

\begin{itemize}
    \item Arrow isotopy, consisting of virtual Reidemeister moves involving edges of $w$-arrows and/or strands of the diagram, together with the following local moves\footnote{Each vertical bicolored strand may represent either a portion of a $w$-arrow or of the diagram.},
    \[\includegraphics{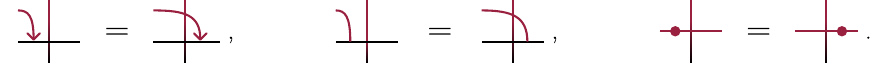}\]

    \item Head/Tail Reversal\footnote{\label{fn:surgery-convention}Because we use a different surgery convention, adapted to unoriented diagrams, the tail reversal move carries one additional twist compared with the version in~\cite{JBYasuArrowcalc}.}
    \[\includegraphics{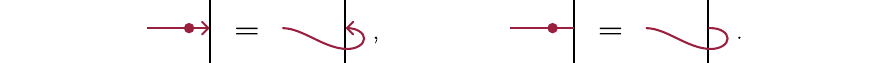}\]

    \item Tails Exchange,
    \[\includegraphics{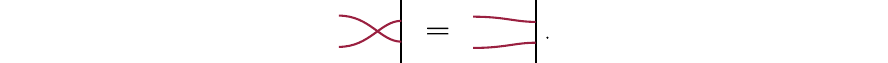}\]

    \item Isolated arrow,
    \[\includegraphics{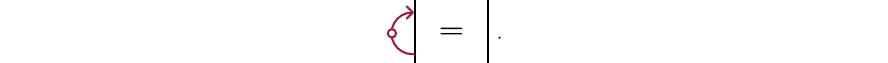}\]

    \item Arrow inverse,
    \[\includegraphics{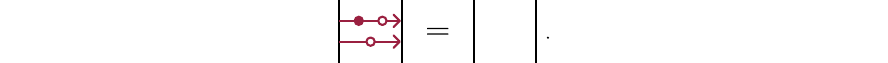}\]

    \item Slide,
    \[\includegraphics{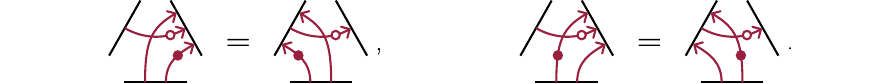}\]
\end{itemize}

The next move is the arrow calculus counterpart of the wink-move.

\begin{defn}\label{defarrow4move}
The local move shown in Figure~\ref{figwelded4arrowmove}, which removes a $w$-arrow enclosing the head of another $w$-arrow while adding a twist to the latter, is called the \emph{\textbf{Arrow~wink-move}}.

\begin{figure}[!htbp]
\centering
\includegraphics{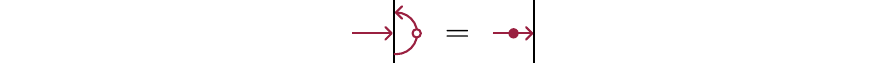}
\caption{Arrow~wink-move.}
\label{figwelded4arrowmove}
\end{figure}
\end{defn}
\begin{rmq}
Applying surgery to both sides of Figure 9 yields diagrams related by a single wink-move, up to welded isotopy. Thus, the Arrow wink-move realizes the wink-move within arrow calculus.
\end{rmq}

The following lemmas are used in the next section to handle $w$-arrows up to wink-equivalence.
\begin{lem}\label{lemarroworder4}
Four parallel $w$-arrows with the same orientation and number of twists cancel each other up to wink-equivalence.
\end{lem}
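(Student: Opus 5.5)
The plan is to reduce the four parallel $w$-arrows, two at a time, to a configuration where the Arrow~wink-move (Definition~\ref{defarrow4move}) applies, and then to cancel what remains with the arrow inverse move. Without loss of generality the four arrows carry no twists. A common twist can be transported to the other endpoint by a head or tail reversal, or treated uniformly, since the Arrow~wink-move and the arrow moves are symmetric under adding twists to all arrows involved. All four arrows have parallel tails $t_1,\dots,t_4$ on one strand and parallel heads $h_1,\dots,h_4$ on another, or possibly the same strand.

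First, I would rewrite two parallel arrows (say arrows $1$ and $2$) as an arrow ``enclosing'' the head of another. The idea is to use head reversal on arrow $2$, which reverses the head and adds a twist, together with the slide and tails-exchange moves. The aim is a configuration in which one $w$-arrow has its head attached next to the head of a parallel arrow, on both sides, which is the left-hand side of Figure~\ref{figwelded4arrowmove}. Equivalently, I would check directly by surgery that a pair of parallel untwisted arrows realizes, up to welded isotopy, the local picture on the left of the wink-move. This amounts to asking whether two parallel arrows produce two consecutive classical crossings, that is a full twist of the two strands. That is plausible, since one arrow gives a crossing-like clasp and two give the ``open eye''.

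Second, I would apply the Arrow~wink-move. It removes the enclosing arrow and adds a twist to the enclosed one. So two parallel arrows become one arrow with a twist, and doing the same to arrows $3,4$ gives a second arrow with a twist. The result is two parallel arrows, each twisted, in the same orientation.

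Third, I would cancel these two. Rather than the inverse move directly, the cleaner route may be to apply the same two-arrow reduction to this twisted pair. Arrows $1,2$ give $A$, arrow $3$ with one twist; arrows $3,4$ give $B$, the same with one twist. Reducing $A,B$ again gives a single arrow with two twists, hence zero twists. That would leave one arrow, not zero, so this bookkeeping is wrong. Instead, I expect the correct identity to be that two parallel arrows are equivalent to the \emph{inverse} pair: two parallel arrows with the opposite twist parity. This is the arrow analogue of ``two crossings equal two opposite crossings'' from the 4-move. Four arrows then become two arrows followed by two arrows with a twist, and those cancel pairwise by the arrow inverse move.

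The main obstacle is exactly this identity: two parallel arrows are wink-equivalent to two parallel twisted arrows. I would try to derive it by inserting a canceling pair of arrows (arrow inverse), then using slide and tails exchange to place one inserted arrow around the head of an original, and finally applying the Arrow~wink-move. The difficulty is tracking the twist bookkeeping under the surgery convention of Figure~\ref{arrowsurgery}, in particular the extra twist in tail reversal noted in the footnote.
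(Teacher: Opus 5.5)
\textbf{There is a genuine gap.} Your final reduction is correct, and it is exactly the paper's. By Arrow inverse it suffices to show that two parallel $w$-arrows are wink-equivalent to the same pair with one extra twist on each. Four arrows then become two untwisted and two twisted ones, which cancel by Arrow inverse. Reading that identity in both directions also covers the twisted case, so your opening twist-transport argument is unnecessary.

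The gap is that this identity is the entire content of the lemma, and you do not prove it. You name it as the main obstacle and sketch ``insert a cancelling pair, move one inserted arrow around a head, apply the Arrow wink-move''. That sketch does not close up, for two reasons:
\begin{enumerate}
\item One Arrow wink-move removes one enclosing arrow and changes the twist of only one of the two original arrows. You still have to explain how the second head also acquires a twist and how every auxiliary arrow disappears.
\item Moving an endpoint past a head is not a free move; only tails commute.
\end{enumerate}
The paper needs a genuine computation here:
\begin{itemize}
\item two Arrow wink-moves, a Tails Exchange and an Arrow inverse, to introduce auxiliary arrows;
\item a Slide of the two middle arrows along a descending auxiliary arrow;
\item an Arrow inverse and a further Arrow wink-move;
\item a second Slide along the same descending arrow;
\item finally Isolated arrow together with two more Arrow wink-moves.
\end{itemize}

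Your first attempt should be discarded outright rather than blamed on bookkeeping, because its premise is false. Two parallel arrows joining two strands are not the left-hand side of the Arrow wink-move. That move requires an arrow enclosing the head of another, i.e.\ with both endpoints on the same strand, one on either side of that head. Nor are two parallel arrows wink-equivalent to a single twisted arrow.

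To see this, colour by the $4$-coherent quandle $\mathbf{Z}_2[a]/(a+1)^3$ of Example~\ref{exquandleFQ2}. Put $S_y(x)=x\rhd y$, so that $S_y^k(x)=a^kx+(1+a^k)y$. Let $y$ be the colour of the tail strand and $x$ that of the head strand. Two parallel untwisted arrows act on $x$ by $S_y^{2\varepsilon}$, and one twisted arrow by $S_y^{-\varepsilon}$, where $\varepsilon=\pm1$ depends only on conventions. These agree only if $a^3=1$. But in this ring $a^3=1+(a+1)+(a+1)^2\neq 1$. Since colourings are preserved by wink-equivalence, that route cannot work. By contrast $a^4=1$, consistent with the lemma itself.
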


\begin{proof}
By Arrow inverse, it is enough to show that two parallel $w$-arrows are wink-equivalent to the same pair of $w$-arrows with one additional twist on each, as shown in Figure~\ref{figlemarroworder4}.

\begin{figure}[!htbp]
\centering
\includegraphics{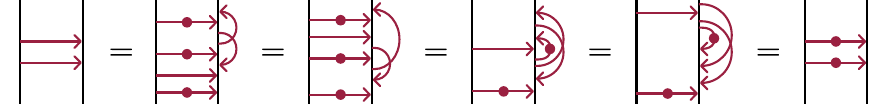}
\caption{Proof of Lemma~\ref{lemarroworder4}.}
\label{figlemarroworder4}
\end{figure}

The first equality uses two Arrow~wink-moves, one Tails Exchange and one Arrow inverse. The second is obtained by sliding the two middle $w$-arrows along the descending $w$-arrow on the right. The third follows from Arrow inverse and an Arrow~wink-move. The fourth is another Slide move, sliding the two right ascending $w$-arrows along the descending one. The last equality follows from an Isolated arrow together with two Arrow~wink-moves.
\end{proof}

\begin{lem}\label{lemarrowcommu}
Two parallel $w$-arrows with opposite orientations can be exchanged, up to wink-equivalence, by adding one twist to each of them.
\end{lem}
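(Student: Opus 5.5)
The plan is to imitate the proof of Lemma~\ref{lemarroworder4}: insert auxiliary $w$-arrows, use Arrow~wink-moves to create and destroy configurations in which one $w$-arrow encloses the head of another, and use Slide to move endpoints past each other. I have not checked the twist bookkeeping in detail, so the steps below are a plan rather than a verified computation.

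\emph{Setup.} Let $a$ and $b$ be the two $w$-arrows. We may take $a$ to run from a strand $s_1$ to a strand $s_2$, and $b$ to run from $s_2$ back to $s_1$. Along $s_1$ we then meet the tail of $a$ followed by the head of $b$, and along $s_2$ the head of $a$ followed by the tail of $b$. The aim is to show that this configuration is wink-equivalent to the one where both pairs of endpoints appear in the reverse order and each of $a,b$ carries one extra twist.

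\emph{Step 1: create an enclosing arrow.} The exchange should cost only twists, not a higher-degree $w$-tree as in ordinary arrow calculus, and the Arrow~wink-move is exactly what trades an enclosing arrow for a twist. So I would first use an Arrow~wink-move backwards on $b$. This produces an auxiliary arrow $c$, parallel to $a$, whose endpoints enclose the head of $b$ on $s_1$, at the cost of one twist on $b$. If the move produces $c$ in an inconvenient form, I would instead insert a cancelling pair $c,\bar c$ by Arrow inverse, adjusting orientations with Head/Tail Reversal; the cancelling pair can equally be placed so that it encloses a head.

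\emph{Step 2: rearrange.} Next I would rearrange endpoints using only welded-isotopy arrow moves. Tails Exchange lets the tail of $c$ pass the tail of $a$. Slide moves the head of $b$ along $c$ to the other side of the head of $a$ on $s_2$; this is the move that effectively carries one endpoint past the other. Arrow inverse then cancels $c$ against $a$, or against $\bar c$.

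\emph{Step 3: remove leftovers and count twists.} Any arrow left enclosing the head of $a$ is removed by a forward Arrow~wink-move, which adds the twist on $a$. Isolated arrows are discarded.

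\emph{Main obstacle.} The delicate part is the bookkeeping of twists and orientations through the Slide moves and the Head/Tail Reversals. Under the present unoriented surgery convention, tail reversal introduces an extra twist, so the parity must come out to exactly one added twist on each arrow. I would track this step by step, following the pattern of Figure~\ref{figlemarroworder4}. If the parity comes out wrong, I would apply Lemma~\ref{lemarroworder4} or insert a further Arrow~wink-move to correct it.
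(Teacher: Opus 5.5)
You are using the right moves, but the step that is supposed to carry out the exchange does not describe valid moves, so there is a genuine gap.

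\textbf{What goes wrong.} The enclosing arrow produced by an Arrow~wink-move is a self-arrow: both of its endpoints lie on the strand that carries the enclosed head. So your $c$ has both ends on $s_1$, which causes three problems.
\begin{itemize}
\item $c$ is not parallel to $a$.
\item Sliding along $c$ cannot take the head of $b$ to $s_2$. Nor should it: the head of $b$ must stay on $s_1$, since exchanging only reorders the endpoints on each strand.
\item Arrow inverse cannot cancel $c$ against $a$, because they do not join the same pair of strands.
\end{itemize}
Your fallback for a wrong parity does not exist either. A twist cannot be added or removed on its own: an Arrow~wink-move changes it only together with an enclosing arrow, and Lemma~\ref{lemarroworder4} concerns stacks of four arrows. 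Moreover, colorings by the quandle of Example~\ref{exquandleFQ2} already distinguish the four possible twistings of the exchanged pair. So the twist count has to come out right by construction.

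\textbf{The missing idea.} You need to say which endpoints the Slide exchanges and what happens to its correction arrows.
\begin{itemize}
\item On $s_2$ the tail of $b$ sits next to the head of $a$. The Slide lets this tail pass that head, at the cost of copies of $a$ placed around the head of $b$.
\item Because $b$ returns to $s_1$, these copies are self-arrows on $s_1$ whose tails sit by the tail of $a$.
\item The copy whose head lies before the head of $b$ acts on the color $x$ of $s_1$ by $x\rhd^{\pm1}x=x$. After Tails Exchanges it is an Isolated arrow and can be removed.
\item The copy whose head lies after the head of $b$ encloses that head. The Arrow~wink-move trades it for a twist on $b$.
\end{itemize}
In coloring terms, with untwisted arrows acting by $\rhd$ the color at their tail, this is the identity $x\rhd(y\rhd x)=(x\rhd y)\rhd x=x\rhd^{-1}y$.

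This reorders the endpoints on one strand and twists one arrow. Doing the same on the other strand, with the roles of $a$ and $b$ swapped, reorders the endpoints there and produces the twist on $a$. That is the shape of the paper's proof: Arrow~wink-move and Isolated arrow, then Slide with Tails Exchange, then Arrow inverse, followed by the same three steps in reverse order. Each half accounts for one of the two twists.

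Your Step~1 is a legal move, but on its own it leaves a configuration to which neither of these Slides applies directly.
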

\quad
\begin{proof}
Figure~\ref{figlemarrowcommu} treats the case where neither $w$-arrow carries a twist. The general case follows from a similar sequence of equalities.
\begin{figure}[!htbp]
\centering
\includegraphics{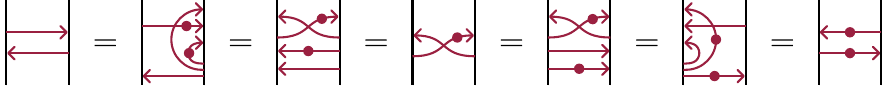}
\caption{Proof of Lemma~\ref{lemarrowcommu}.}
\label{figlemarrowcommu}
\end{figure}

The first equality follows from an Arrow~wink-move and an Isolated arrow. The second is given by a Slide move and a Tails Exchange, and the third an Arrow inverse. The last three equalities are the same as the first three, applied in reverse order. 
\end{proof}

\begin{lem}\label{lemarrowsquareenclosed}
If the heads of two parallel $w$-arrows having the same orientation are enclosed by another $w$-arrow, then the enclosing $w$-arrow can be removed up to wink-equivalence.
\end{lem}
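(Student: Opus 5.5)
The idea is to reduce the enclosure of two heads to two successive enclosures of a single head, each of which is handled by the Arrow~wink-move (Definition~\ref{defarrow4move}). Write $a_1,a_2$ for the two parallel $w$-arrows with the same orientation, and $e$ for the enclosing $w$-arrow, whose head and tail sit on either side of the pair of heads of $a_1,a_2$ on the same strand.

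\textbf{Step 1: split the enclosing arrow.} Using Arrow inverse, I insert next to $e$ a pair of $w$-arrows that cancel each other. I choose this pair so that, after a Tails Exchange and arrow isotopy, the configuration becomes: one copy of $e$ enclosing only the head of $a_1$, followed by a copy of $e$ enclosing only the head of $a_2$. Concretely, I would place the inverse pair between the heads of $a_1$ and $a_2$, with their tails adjacent to the tail of $e$. A Slide of $e$ (or of one member of the pair) along the strand then transfers the enclosure.

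\textbf{Step 2: apply the Arrow~wink-move twice.} Each of the two enclosures is now removed by one Arrow~wink-move. Each move adds one twist, one to $a_1$ and one to $a_2$.

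\textbf{Step 3: remove the extra twists.} The two parallel arrows $a_1,a_2$ now each carry one additional twist. The proof of Lemma~\ref{lemarroworder4} established, as its key intermediate step, that two parallel equally oriented $w$-arrows are wink-equivalent to the same pair with one additional twist on each. Applying this removes both twists simultaneously. This gives back $a_1,a_2$ with $e$ deleted, which is the claim. If the enclosing arrow carries a twist, or encloses the heads from the other side, I would first normalize it using Head/Tail Reversal and the alternative versions of the Arrow~wink-move.

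\textbf{Main obstacle.} I expect Step~1 to be the hard part. Slide moves produce auxiliary arrows whose heads land on edges of other arrows, and I must check that these either cancel by Arrow inverse or become isolated arrows. Lemma~\ref{lemarrowcommu} may be needed to move arrows past each other, which introduces further twists. If the direct splitting fails, I will try the alternative route. Write $a_2$ as three parallel copies of $a_1$ with opposite orientation, using Lemma~\ref{lemarroworder4} together with Arrow inverse. The enclosure then surrounds heads that cancel in pairs, and commuting $e$ past them with Lemma~\ref{lemarrowcommu} should lead to the same conclusion.
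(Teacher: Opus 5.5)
There is a genuine gap, and it is where you expected it: Step~1 cannot be carried out with the moves you propose. Arrow inverse, Tails Exchange, Slide and arrow isotopy all realize welded isotopies. They therefore preserve colorings by \emph{every} quandle, not only $4$-coherent ones.

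Take the conjugation quandle of the free group on $x,y$, with $g\rhd h=h^{-1}gh$. Colour the strand carrying the heads by $x$ below all endpoints, and the strand carrying the tails of $a_1,a_2$ by $y$. Choose the twists so that every head acts by $\rhd$. The starting configuration then outputs $((x\rhd y)\rhd y)\rhd x=x^{-1}y^{-2}xy^{2}x$.

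In your target, the arrow enclosing only the head of $a_2$ must have its tail between the two heads. There the colour is already $u=(x\rhd y)\rhd x=x^{-1}y^{-1}xyx$. So the output is $(u\rhd y)\rhd u=u^{-1}y^{-1}uyu$, a reduced word of length $17$. Changing the twists on the two new enclosing arrows does not repair this. Hence the two configurations are not welded isotopic.

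Conceptually, a tail cannot be moved past a head for free, because it then reads a different colour. Your inverse pair and the tail of $e$ all read $x$, while the second enclosure needs a tail reading $u$. The splitting does hold up to wink-equivalence, but only because both sides reduce to $a_1,a_2$. Justifying it is therefore equivalent to the lemma itself.

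Your fallback is circular as well. Lemma~\ref{lemarroworder4} with Arrow inverse only lets you replace $a_2$ by three parallel copies carrying an extra twist, not copies with reversed orientation. Cancelling one of them against $a_1$ leaves $e$ enclosing two parallel, equally oriented (now twisted) heads, which is the situation you started from. Lemma~\ref{lemarrowcommu} concerns two parallel arrows running in opposite directions between two strands. It says nothing about commuting $e$, whose endpoints lie on a single strand, past those heads.

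The missing idea is that an Arrow~wink-move must be used \emph{before} any sliding, so as to reach a configuration that is not welded isotopic to the initial one. This is how the paper proceeds: one Arrow~wink-move together with a Tails Exchange, then a single Slide, then two further Arrow~wink-moves. It makes no appeal to Lemmas~\ref{lemarroworder4} or~\ref{lemarrowcommu}.

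Your Steps~2 and~3 are fine on their own; quoting the intermediate step of the proof of Lemma~\ref{lemarroworder4} is legitimate and not circular. But they rest entirely on Step~1, which is where all the content of the lemma lies.
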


\begin{proof}
We only consider the case where the enclosing $w$-arrow is ascending (Figure~\ref{figlemarrowsquareenclosed}); the descending case is symmetric.

\begin{figure}[!htbp]
\centering
\includegraphics{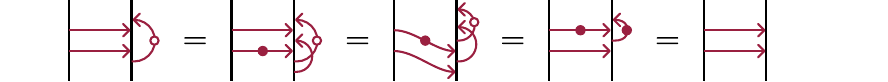}
\caption{Proof of Lemma~\ref{lemarrowsquareenclosed}.}
\label{figlemarrowsquareenclosed}
\end{figure}

The first equality follows from an Arrow~wink-move and a Tails Exchange. The second is obtained by a Slide move. The remaining equalities follow from two further Arrow~wink-moves.
\end{proof}

\section{Virtual Tangles}\label{sectiontangle}

\begin{defn}\label{defvirtualtangle}
A \emph{\textbf{virtual tangle}} is a virtual diagram whose ambient disk is identified with the unit square $[0,1]^2$. Its underlying one-dimensional manifold consists of two intervals together with finitely many circles. The endpoints of the intervals are required to lie at the four corners of the square, namely the \textit{top-left}, \textit{top-right}, \textit{bottom-left}, and \textit{bottom-right} corners.
\end{defn}

There are two natural composition laws on virtual tangles: the \emph{\textbf{vertical stacking operation}}, denoted by $\vstack$, and the \emph{\textbf{horizontal stacking operation}}, denoted by $\hstack$, illustrated in Figure~\ref{figtanglestack}.

\begin{figure}[!htbp]
\centering
\includegraphics{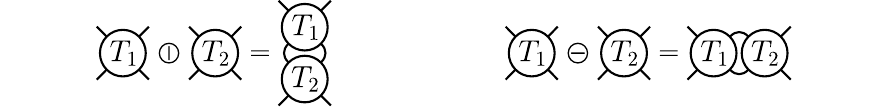}
\caption{Vertical and horizontal stacking operations on virtual tangles.}
\label{figtanglestack}
\end{figure}

We also consider the \emph{\textbf{vertical closure}} and \emph{\textbf{horizontal closure}} operations, which produce \textit{virtual link diagrams} by connecting the boundary points as shown in Figure~\ref{figtangleclosure}.

\begin{figure}[!htbp]
    \centering
    \includegraphics{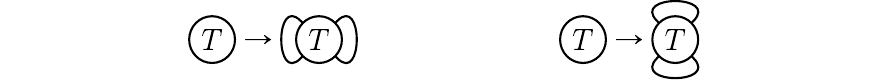}
    \caption{Vertical and horizontal closures of virtual tangles.}
    \label{figtangleclosure}
\end{figure}

\begin{defn}\label{defalgvirtualtangle}
An \emph{\textbf{algebraic virtual tangle}} is any virtual tangle generated from the following elementary tangles using the stacking operations:
\[
\includegraphics{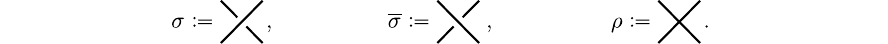}
\]

We similarly define the sets of \emph{\textbf{vertical algebraic virtual tangles}} and \emph{\textbf{horizontal algebraic virtual tangles}} by allowing only one of the two stacking operations.
\end{defn}

Let $\mathcal{T}$ denote the set of algebraic virtual tangles without closed components up to wink-equivalence. Likewise, let $\mathcal{T}_{\subvstack}$ and $\mathcal{T}_{\subhstack}$ denote the sets of vertical and horizontal algebraic virtual tangles up to wink-equivalence.

The classical and virtual Reidemeister~II moves endow $\mathcal{T}_{\subvstack}$ and $\mathcal{T}_{\subhstack}$ with natural group structures. Indeed, they can be identified with quotients of the \emph{two-strand virtual braid group} by wink-equivalence. Rotation by $90^\circ$ also induces a natural group isomorphism between $\mathcal{T}_{\subvstack}$ and $\mathcal{T}_{\subhstack}$.

To state the next result, we introduce the following tangles,

\begin{figure}[!htbp]
\centering
\includegraphics{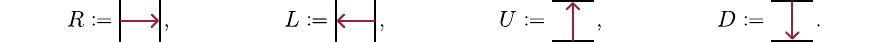}
\end{figure}
\noindent Their expressions are given by
\[R=\overline{\sigma}\vstack\rho,\hspace{53.45pt} L=\rho\vstack\overline{\sigma},\hspace{53.45pt} U=\sigma\hstack\rho,\hspace{53.45pt} D=\rho\hstack\sigma.\]

\begin{prop}\label{proptanglesets}
The sets $\mathcal{T}_{\subvstack}$ and $\mathcal{T}_{\subhstack}$ are finite and are given by
\[
\mathcal{T}_{\subvstack}
=
\{R^u \vstack L^v \vstack \rho^w \mid u,v\in\{0,1,2,3\},\; w\in\{0,1\}\},
\]
and
\[
\mathcal{T}_{\subhstack}
=
\{U^u \hstack D^v \hstack \rho^w \mid u,v\in\{0,1,2,3\},\; w\in\{0,1\}\}.
\]
The choice $u=v=w=0$ corresponds to the vertical $\infty$-tangle and the horizontal $0$-tangle, respectively, consisting of two parallel vertical strands and two parallel horizontal strands.
\end{prop}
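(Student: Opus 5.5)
The plan is to work in $\mathcal{T}_{\subvstack}$ and get the horizontal case by rotating by $90^\circ$, which is a group isomorphism sending $\sigma,\rho$ to the corresponding horizontal generators and $R,L$ to $U,D$ (up to the conventions in the pictures).

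\emph{Group-theoretic reduction.} A vertical algebraic virtual tangle is a word in $\sigma^{\pm1}$ and $\rho$. Hence $\mathcal{T}_{\subvstack}$ is a quotient of the two-strand virtual braid group. That group is generated by $\sigma$ and $\rho$, subject only to $\rho^2=1$; for two strands neither the mixed move nor the (OC)-move adds relations. So it is $\mathbf{Z}*\mathbf{Z}/2$.

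Consider the homomorphism to $\mathbf{Z}/2$ that counts $\rho$'s modulo $2$ and sends $\sigma$ to $0$. Its kernel has index two. Since $\overline{\sigma}=R\rho$, $\sigma=\rho R^{-1}$ and $L=\rho R\rho$, a Reidemeister--Schreier argument shows this kernel is freely generated by $R$ and $L$. Every element can then be written as $W(R,L)\vstack\rho^w$ with $w\in\{0,1\}$, and conjugation by $\rho$ exchanges $R$ and $L$.

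It therefore suffices to prove three relations up to wink-equivalence:
\[
R^4=1,\qquad L^4=1,\qquad RL=LR .
\]
Given these, every word collapses to $R^u\vstack L^v\vstack\rho^w$ with $u,v\in\{0,1,2,3\}$, giving the stated finite list. The relation $L^4=1$ follows from $R^4=1$ by conjugating with $\rho$, so only two relations need proof.

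\emph{Arrow-calculus translation.} Using the surgery convention of Figure~\ref{arrowsurgery}, I would check that $R=\overline{\sigma}\vstack\rho$ is surgery on a single $w$-arrow joining the two vertical strands of the trivial $\infty$-tangle. A classical crossing next to a virtual crossing is exactly the result of one arrow surgery. Likewise, $L$ is surgery on a single $w$-arrow joining the two strands in the opposite direction (possibly carrying a twist, depending on the convention), and $R^{-1}$, $L^{-1}$ are the same arrows with an extra twist, by Arrow inverse.

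Under this dictionary, $R^4$ is four parallel $w$-arrows with the same orientation and twist count. Lemma~\ref{lemarroworder4} removes them, giving $R^4=1$. The product $RL$ is two parallel $w$-arrows with opposite orientations. Lemma~\ref{lemarrowcommu} exchanges them at the cost of one twist on each, so $RL=L^{-1}R^{-1}$ (or a twisted variant, according to the twist bookkeeping).

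\emph{The main obstacle} is converting this twisted exchange into honest commutation. I would first combine it with $R^4=L^4=1$ and the relation $R^2L=LR^2$. I expect the latter from Lemma~\ref{lemarrowsquareenclosed}, or from a direct Arrow~wink-move, since two parallel same-oriented heads can be cleared of an enclosing arrow. The aim is to deduce $RL=LR$ from these.

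If that manipulation does not close up, I would go back to the pictures. There I would apply an Arrow~wink-move directly to the configuration $R\vstack L$: one arrow's endpoints enclose the head of the other. This should produce $RL=LR^{-1}$-type identities, which together with the twisted exchange give commutation.

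Finally, the statement says only that the sets are "given by" these representatives, so distinctness is not strictly needed. If it is wanted, I would separate the $32$ candidates by counting colorings of their closures by the $4$-coherent quandle of Example~\ref{exquandleFQ2}. These counts are wink-invariant by the computation around Figure~\ref{Figwelded4movecolor}.
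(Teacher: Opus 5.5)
Your reduction hinges on proving $R\vstack L=L\vstack R$, and this relation is \emph{false} up to wink-equivalence, so neither of your routes to it can succeed. To see this, colour with the $4$-coherent quandle $\mathbf{Z}_2[a]/(a+1)^3$ of Example~\ref{exquandleFQ2}, orienting both strands downwards; for these two-strand tangles the map from top colours to bottom colours is a wink-invariant. Write $x\rhd^{\pm1}y=sx+(1+s)y$ with $s=a^{\pm1}$. Then $R$ acts by $(x,y)\mapsto(x,\,sy+(1+s)x)$, and $L=\rho\vstack R\vstack\rho$ acts by $(x,y)\mapsto(sx+(1+s)y,\,y)$; under the opposite conventions the picture is mirrored, which changes nothing below. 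From top colours $(0,1)$, $R\vstack L$ yields $\bigl(s(1+s),\,s\bigr)$ and $L\vstack R$ yields $\bigl(1+s,\,s+(1+s)^2\bigr)$. Each entry differs by $(1+s)^2$, which is nonzero because $(1+a)^2\neq0$ in $\mathbf{Z}_2[a]/(a+1)^3$ and $1+a^{-1}=a^{-1}(1+a)$. Hence no combination of $R^4=L^4=1$, $R\vstack L=L^{-1}\vstack R^{-1}$ and $R^2\vstack L=L\vstack R^2$ can give commutation. Your fallback identity $R\vstack L=L\vstack R^{-1}$ is also false: together with the twisted exchange it would force $L^2=1$, whereas $L^2$ acts by $x\mapsto s^2x+(1+s)^2y$.

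The missing idea is that commutation is not needed, because the twisted exchange is already a sorting rule. Applying Lemma~\ref{lemarrowcommu} arrow by arrow gives $L^{v}\vstack R^{u}=R^{(-1)^{v}u}\vstack L^{(-1)^{u}v}$, so every word in $R^{\pm1},L^{\pm1}$ can be brought to the form $R^{u}\vstack L^{v}$, and the resulting group is non-abelian. This is the paper's argument. It first pushes the $\rho$'s to the right with $\rho\vstack R=L\vstack\rho$ and $\rho\vstack L=R\vstack\rho$. It then sorts with Lemma~\ref{lemarrowcommu} and reduces exponents with Lemma~\ref{lemarroworder4} and virtual Reidemeister~II. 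Finally it checks that $\sigma=L^{-1}\vstack\rho$ and $\overline{\sigma}=R\vstack\rho$ lie in the list.

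There are also two smaller slips. First, your ``count the $\rho$'s'' homomorphism does not have $R$ or $L$ in its kernel. The right map is the permutation homomorphism sending both $\sigma$ and $\rho$ to the transposition, whose kernel is indeed free on $R$ and $L$. Second, colouring counts of closures cannot separate the $32$ elements; for instance, $L$ is $R$ rotated by $180^\circ$, so their closures agree. The paper's remark uses the boundary colour map instead.
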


\begin{proof}
Since rotation identifies the two sets, it is enough to prove the result for $\mathcal{T}_{\subvstack}$.

We first show that $\mathcal{T}_{\subvstack}$ is closed under vertical stacking. Write
\[
T_i\coloneqq R^{u_i}\vstack L^{v_i}\vstack \rho^{w_i},
\qquad i=1,2,
\]
where $u_i,v_i\in{0,1,2,3}$ and $w_i\in{0,1}$. By virtual Reidemeister~II move,
\[
\rho\vstack R = L\vstack \rho,
\qquad
\rho\vstack L = R\vstack \rho.
\]
Hence
\[
T_1\vstack T_2=
\begin{cases}
R^{u_1}\vstack L^{v_1}\vstack R^{u_2}\vstack L^{v_2}\vstack \rho^{w_2},
& \text{if } w_1=0,\\[1ex]
R^{u_1}\vstack L^{v_1+u_2}\vstack R^{v_2}\vstack \rho^{1+w_2},
& \text{if } w_1=1.
\end{cases}
\]
Applying Lemma~\ref{lemarrowcommu} yields
\[
T_1\vstack T_2=
\begin{cases}
R^{u_1-u_2}\vstack L^{-v_1+v_2}\vstack \rho^{w_2},
& \text{if } w_1=0,\\[1ex]
R^{u_1-v_2}\vstack L^{-v_1-u_2}\vstack \rho^{1+w_2},
& \text{if } w_1=1.
\end{cases}
\]
The exponent of $\rho$ can be reduced modulo $2$ using the virtual Reidemeister~II move, while Lemma~\ref{lemarroworder4} allows the exponents of $R$ and $L$ to be reduced modulo $4$. Thus every product has the required form.

It remains to check that the elementary tangles belong to this set:
\[
\sigma = L^{-1}\vstack\rho,\qquad
\bar{\sigma}=R\vstack\rho,\qquad
\rho=\rho.
\]
This completes the proof. 
\end{proof}

\begin{rmq}
The elements of $\mathcal{T}_{\subvstack}$ listed in Proposition~\ref{proptanglesets} are pairwise distinct. Indeed, consider colorings by the \textit{4-}coherent quandle
\[
\mathbf{Z}_2[a]\big/(a+1)^3
\]
from Example~\ref{exquandleFQ2}. Assign the colors $0$ and $1$ to the \textit{top-left} and \textit{top-right} strands, respectively, and propagate them using the coloring rules of Figure~\ref{Figcoloredcrossing}. The resulting colors on the \textit{bottom-left} and \textit{bottom-right} strands determine the values of $u$, $v$, and $w$, and therefore distinguish all elements of $\mathcal{T}_{\subvstack}$. Since rotation identifies $\mathcal{T}_{\subvstack}$ and $\mathcal{T}_{\subhstack}$, the analogous statement holds for $\mathcal{T}_{\subhstack}$.
\end{rmq}

For Proposition~\ref{proptantglesetintersection} and Theorem~\ref{thmtangleset}, we introduce the following convention.

\begin{conv}
In the figures, a $w$-arrow labelled by a circled integer $u\in\mathbf Z$ denotes a stack of $|u|$ parallel $w$-arrowss, all with the same orientation. If $u<0$, each $w$-arrow in the stack carries an additional twist. This convention is illustrated in Figure~\ref{figarrowstackconvention}.

\begin{figure}[!htbp]
\centering
\includegraphics{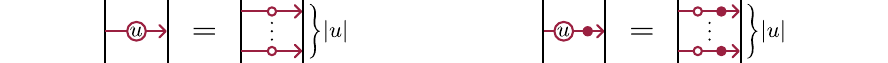}
\caption{Convention for the stacking of $u$ parallel $w$-arrows.}
\label{figarrowstackconvention}
\end{figure}
\end{conv}

\begin{prop}\label{proptantglesetintersection}
The intersection of $\mathcal{T}_{\subhstack}$ and $\mathcal{T}_{\subvstack}$ is
\[
\mathcal{T}_{\subhstack}\cap\mathcal{T}_{\subvstack}
=
\{T^u \hstack D^v \hstack \rho \mid u,v\in\{0,1,2,3\}\}
=
\{R^u \vstack L^v \vstack \rho \mid u,v\in\{0,1,2,3\}\}
.
\]
\end{prop}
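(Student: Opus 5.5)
The plan is to use strand connectivity to force $w=1$ on both sides, and then to identify the two $w=1$ families through a common $w$-arrow picture on the virtual crossing $\rho$. (I read the $T^u$ in the statement as $U^u$, matching Proposition~\ref{proptanglesets}.)

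\emph{Step 1: connectivity.} For a tangle without closed components, the way the four corners are joined by the two intervals is preserved by welded isotopy and by wink-moves, since these are local moves that do not change the underlying immersed $1$-manifold's endpoints. I would check this on the generators.
\begin{itemize}
\item In $R^u\vstack L^v\vstack\rho^w$, each factor $R=\overline{\sigma}\vstack\rho$ and $L=\rho\vstack\overline{\sigma}$ is a vertical stack of two crossings. Each of these pairs top-left with bottom-left. So the whole tangle pairs top-left with bottom-left if $w=0$, and with bottom-right if $w=1$.
\item In $U^u\hstack D^v\hstack\rho^w$, each factor $U$ and $D$ pairs top-left with top-right. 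So the whole tangle pairs top-left with top-right if $w=0$, and with bottom-right if $w=1$.
\end{itemize}
Hence any tangle lying in both sets must have $w=1$ in both descriptions. This gives the inclusion of $\mathcal{T}_{\subhstack}\cap\mathcal{T}_{\subvstack}$ into each of the two displayed $16$-element families.

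\emph{Step 2: reverse inclusion.} It suffices to show that every $R^u\vstack L^v\vstack\rho$ is wink-equivalent to some $U^{u'}\hstack D^{v'}\hstack\rho$.
\begin{itemize}
\item Write $R$ and $L$ as the trivial vertical tangle carrying a single $w$-arrow between the two strands (one per direction, possibly with a twist). Then $R^u\vstack L^v\vstack\rho$ is the diagram $\rho$ decorated by two stacks of $w$-arrows, labelled $u$ and $v$ in the circled convention, placed above the virtual crossing.
\item By arrow isotopy (virtual Reidemeister moves of arrow endpoints through the virtual crossing $\rho$), slide the endpoints of these arrows along the strands of $\rho$. The arrows then sit in the left and right regions of the virtual crossing.
\item Rotated by $90^\circ$, the resulting picture should be exactly the arrow description of $U^{u'}\hstack D^{v'}\hstack\rho$, with $(u',v')$ equal to $(u,v)$ or $(v,u)$, possibly up to signs.
\item Any twists or sign changes introduced by Head/Tail Reversal are absorbed using Arrow inverse and Lemma~\ref{lemarroworder4}, which reduces the exponents modulo $4$.
\end{itemize}

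\emph{Step 3: equality of the two descriptions.} Step 2 gives a map from $\{R^u\vstack L^v\vstack\rho\}$ into $\{U^{u'}\hstack D^{v'}\hstack\rho\}$. By the preceding remark, the elements of each family are pairwise distinct under colorings by $\mathbf{Z}_2[a]/(a+1)^3$, so each family has exactly $16$ elements. The map is injective, because it is an equality of tangles and distinct elements of the source stay distinct. An injective map between two $16$-element sets is a bijection, so the two families coincide, and both equal the intersection.

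The main obstacle is Step 2. One has to verify carefully, with the paper's unoriented surgery convention, which arrow (direction and twist) represents $R$, $L$, $U$ and $D$, and how heads and tails transform as they pass through $\rho$. I would first work this out for the single case $R\vstack\rho$, drawing the arrow explicitly. I would then use the group relations $\rho\vstack R=L\vstack\rho$ and Lemma~\ref{lemarrowcommu} to extend to general $u$ and $v$.
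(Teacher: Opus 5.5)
Your Step~1 is the paper's connectivity argument, and the counting in Step~3 would be fine. However, Step~2 fails as soon as $u$ and $v$ are both nonzero modulo~$4$.

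Pushing arrow endpoints through the virtual crossing $\rho$ never changes the order in which endpoints occur along a strand. Orient $A$ from top-left to bottom-right and $B$ from top-right to bottom-left, and match stacks by the direction of their arrows. In $R^u\vstack L^v\vstack\rho$ the $u$-stack and the $v$-stack occur in the same order along $A$ and along $B$. In $U^{u'}\hstack D^{v'}\hstack\rho$, and also in $D^{v'}\hstack U^{u'}\hstack\rho$, they occur in opposite orders along $A$ and along $B$. On each strand one stack contributes tails and the other heads. So you would have to move the tails of one stack past the heads of the other along a single strand.

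For a single stack this problem does not arise, since Tails Exchange reorders the stack. That is the paper's first equality (arrow isotopy, Head/Tail Reversal and Tails Exchange), which already settles $u=0$. Note that you need Tails Exchange there, not only isotopy and Head/Tail Reversal.

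In general the required head--tail exchange is not a welded isotopy. For instance, $R\vstack L\vstack\rho=\overline{\sigma}\vstack\overline{\sigma}\vstack\rho$ is distinguished from every $U^{u'}\hstack D^{v'}\hstack\rho$ and every $D^{v'}\hstack U^{u'}\hstack\rho$ with $u',v'\in\mathbf{Z}$. The distinguishing invariant is colorings by the Alexander quandle $\mathbf{Z}[t^{\pm1}]$, $x\rhd y=tx+(1-t)y$, which is not $4$-coherent. So no bookkeeping of twists or signs can rescue Step~2. Nor can ``rotation by $90^\circ$'', which is not an equivalence of tangles. Wink-moves are needed in the identification itself, not only to reduce exponents modulo~$4$ afterwards.

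The missing ingredient is the second half of the paper's chain of equalities. First introduce cancelling pairs by Arrow inverse; for $u=2$ these go between the two tails of the $u$-factor. Then slide the $u$-factor along the $v$-factor. Finally remove the enclosing arrows this creates: use Isolated arrow together with $v$ Arrow~wink-moves when $u=\pm1$, or $v$ applications of Lemma~\ref{lemarrowsquareenclosed} when $u=2$. This twists the $u$-factor according to the parity of $v$, and Lemma~\ref{lemarrowcommu} then restores the order $U^{u'}\hstack D^{v'}$.

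Your suggested fallback, combining $\rho\vstack R=L\vstack\rho$ with Lemma~\ref{lemarrowcommu}, cannot replace this step. Both relations stay inside the vertical group and exchange two stacks simultaneously on both strands. They therefore never change the relative order of the stacks along one strand only, which is exactly what is needed.
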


\begin{proof}
By Proposition~\ref{proptanglesets}, any tangle in the intersection must connect the boundary points along the same diagonal, namely from the \textit{top-left} to the \textit{bottom-right} and from the \textit{top-right} to the \textit{bottom-left}. Hence, every element of the intersection belongs to the sets appearing in the statement.

It therefore remains to show that the two families given in the statement coincide. This follows from the chain of equalities shown in Figure~\ref{figproptantglesetintersection}.

\begin{figure}[!htbp]
\centering
\includegraphics{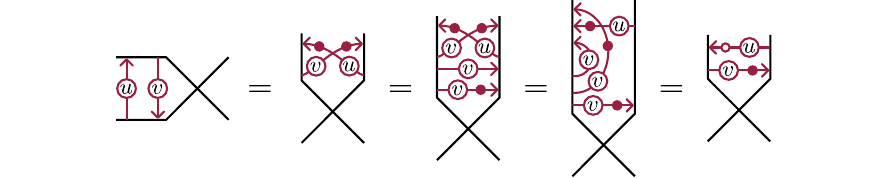}
\caption{Proof of Proposition~\ref{proptantglesetintersection}.}
\label{figproptantglesetintersection}
\end{figure}

The first equality follows from Arrow isotopy, Head/Tail Reversal and Tails Exchange, and already proves the result when $u=0$. The second follows from Arrow inverse, and the third from Slide move. When $u=2$, one first applies Arrow inverse to introduce $w$-arrows between the two tails of the \emph{$u$-factor}, and then performs the sliding. The last equality follows from Isolated arrow together with $v$~applications of the Arrow~wink-move when $u=\pm1$, and of Lemma~\ref{lemarrowsquareenclosed} when $u=2$. During this process, the \emph{$u$-factor} may acquire twists due to the Arrow~wink-moves. Since twists are involutive, the resulting twisting depends only on the parity of $v$. Finally, to recover the expression of the statement it remains to exchange the $u$-factor with the $v$-factor using Lemma~\ref{lemarrowcommu}.
\end{proof}

\begin{thm}\label{thmtangleset}
The set $\mathcal{T}$ is finite and is given by
\[
\mathcal{T}
=
\{X(u,v),\,H(u,v),\,V(u,v)
\mid u,v\in\{0,1,2,3\}\},
\]
where
\[
X(u,v)=T^u \hstack D^v \hstack \rho,
\qquad
H(u,v)=T^u \hstack D^v,
\qquad
V(u,v)=R^u \vstack L^v.
\]
\end{thm}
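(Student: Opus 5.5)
We will argue by induction on the construction of an algebraic virtual tangle without closed components. Let $\mathcal{S}$ denote the proposed set $\{X(u,v),H(u,v),V(u,v)\}$. By Proposition~\ref{proptantglesetintersection}, $X(u,v)$ lies in $\mathcal{T}_{\subvstack}\cap\mathcal{T}_{\subhstack}$. By Proposition~\ref{proptanglesets}, $H(u,v)\in\mathcal{T}_{\subhstack}$ and $V(u,v)\in\mathcal{T}_{\subvstack}$. Hence $\mathcal{S}=\mathcal{T}_{\subvstack}\cup\mathcal{T}_{\subhstack}$: every element of $\mathcal{T}_{\subvstack}$ has the form $V(u,v)$ or $V(u,v)\vstack\rho$, and the latter is some $X(u',v')$ by Proposition~\ref{proptantglesetintersection}; symmetrically for $\mathcal{T}_{\subhstack}$.

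The elementary tangles lie in $\mathcal{S}$. It therefore suffices to show that whenever $T_1,T_2\in\mathcal{S}$ and $T_1\vstack T_2$ (respectively $T_1\hstack T_2$) has no closed component, the result again lies in $\mathcal{S}$. By the $90^\circ$ rotation symmetry, which exchanges $\vstack$ and $\hstack$, exchanges $H$ and $V$, and preserves the $X$-family, we only need to treat vertical stacking.

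If both $T_i$ lie in $\mathcal{T}_{\subvstack}$, Proposition~\ref{proptanglesets} gives closure immediately. The remaining case is when at least one factor, say $T_1$, is $H(u,v)$, and this requires the connectivity condition. Here $H(u,v)$ connects top-left to top-right and bottom-left to bottom-right. Stacking it vertically with $T_2$ produces a closed component exactly when $T_2$ is of type $H$, since the bottom cap of $T_1$ meets the top cap of $T_2$. So $T_2$ is of type $V$ or $X$, and then $T_1\vstack T_2$ again connects top-left to top-right. The goal is to show
\[
H(u,v)\vstack T_2 = H(u',v')
\]
for suitable $u',v'$, and similarly for $T_2\vstack H(u,v)$.

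This absorption is the main obstacle. I would argue it in arrow calculus. Write $T_2=R^{a}\vstack L^{b}\vstack\rho^{\epsilon}$, and present $R$ and $L$ as a single $w$-arrow between the two vertical strands. After stacking, both strands of $T_2$ are arcs of the single bottom cap of $H(u,v)$, so these $w$-arrows have head and tail on the same component. The plan is then as follows.
\begin{enumerate}
\item Slide each such arrow along the cap, using Arrow isotopy and Tails Exchange, until it becomes a self-arrow with adjacent endpoints.
\item Remove it by Isolated arrow, possibly after Head/Tail Reversal. Twists only affect local virtual kinks, which are removed by welded isotopy.
\item Handle the factor $\rho$ similarly: it becomes a virtual kink on the cap and is removed by a virtual Reidemeister~I move.
\end{enumerate}
If the $w$-arrows of $H(u,v)$ obstruct the sliding, first use the Slide move and Lemma~\ref{lemarrowcommu} to commute the arrows of $T_2$ past those of the $D$-factor, at the cost of twists. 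Lemma~\ref{lemarroworder4} and Lemma~\ref{lemarrowsquareenclosed} then clean up any extra arrows produced, bringing the result back to $U^{u'}\hstack D^{v'}$ with exponents reduced modulo $4$.

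The case $T_2=X(u,v)$ should reduce to the $V$ case via its vertical form from Proposition~\ref{proptantglesetintersection}. I expect the bookkeeping of twists and of possible residual changes to $u,v$ in this absorption step to be the delicate point.
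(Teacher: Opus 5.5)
\textbf{Your reduction is right; the absorption step has a genuine gap.} Your reduction matches the paper's, up to choosing vertical rather than horizontal stacking. You use rotation to handle a single operation, Proposition~\ref{proptanglesets} when both factors have the same type, and connectivity to exclude closed components. That leaves the mixed products of $H(u,v)$ with a tangle of type $V$ or $X$ as the real content, and this is where the proposal breaks.

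\textbf{Why step~1 fails.} The bottom component of $H(u,v)\vstack V(a,b)$ is an interval. It runs up the left strand of $V(a,b)$, across the bottom strand of $H(u,v)$, and down the right strand. So each arrow of $V(a,b)$ becomes a self-arrow whose endpoints necessarily enclose every endpoint of $H(u,v)$ on that bottom strand, and one factor of $H(u,v)$ has its \emph{heads} there. Arrow isotopy, Tails Exchange, Head/Tail Reversal and Isolated arrow never move an endpoint across a head.

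In fact no argument that leaves $H(u,v)$ unchanged can be correct. Take $H(u,v)$ to be a single untwisted arrow with its head on the bottom strand, and $T_2=R$. The self-arrow then encloses exactly that head. The Arrow~wink-move deletes it and twists the enclosed arrow, turning exponent $1$ into $-1\equiv 3$. By the quandle-coloring Remark after Proposition~\ref{proptanglesets} (transported by rotation), these two tangles are distinct. So ``slide until adjacent, then Isolated arrow'' would give the wrong answer.

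Your fallback does not help. Lemma~\ref{lemarrowcommu} concerns two parallel arrows joining the same pair of strands, and an arrow of $T_2$ is not parallel to an arrow of the $D$-factor.

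\textbf{The missing idea.} You should delete each self-arrow \emph{while} it still encloses those heads; this is the paper's $B$-case and the source of the exponent changes.
\begin{enumerate}
\item Process the arrows of $T_2$ starting from the one adjacent to $H(u,v)$.
\item If the $U$- and $D$-factors of $H(u,v)$ are in the wrong order, first swap them with Lemma~\ref{lemarrowcommu}. This is the only place that lemma enters.
\item Use Tails Exchange to move the self-arrow's tail across the tails of the other factor. It now encloses exactly the $k$ heads of one factor, with $k\in\{-1,0,1,2\}$ after Lemma~\ref{lemarroworder4} and Arrow inverse.
\item Remove it by Isolated arrow if $k=0$, by the Arrow~wink-move if $k=\pm1$ (which twists the enclosed arrow), and by Lemma~\ref{lemarrowsquareenclosed} if $k=2$.
\end{enumerate}

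\textbf{A second gap in the $X$-case.} In $H(u',v')\vstack\rho$, the loop of the virtual kink is the bottom strand of $H(u',v')$, which carries arrow endpoints. So a virtual Reidemeister~I move does not apply. Undoing the kink reverses the order of those endpoints, with twists from Head/Tail Reversal. When both factors are nonzero, the $U$- and $D$-arrows therefore end up crossing. Straightening them requires the argument of Proposition~\ref{proptantglesetintersection}: Arrow inverse, Slide, Arrow~wink-moves or Lemma~\ref{lemarrowsquareenclosed}, then Lemma~\ref{lemarrowcommu}. This is what the paper's $C$-case does.
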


\begin{proof}
The elementary tangles $\sigma$, $\overline{\sigma}$ and $\rho$ all belong to the set above. It is therefore enough to prove that this set is closed under the stacking operations $\vstack$ and $\hstack$, whenever no closed component is created.

By Proposition~\ref{proptantglesetintersection} and Lemma~\ref{lemarrowcommu}, rotation fixes the tangles $X(u,v)$ and exchanges $V(u,v)$ with $H(u,v)$. Since $\mathcal T$ is invariant under rotation, and rotation exchanges vertical and horizontal stacking, it is enough to establish closure under one of the two operations.

We therefore consider the nine possible horizontal stackings. The resulting configurations fall into four distinct cases, denoted by $A$, $B$, $C$, and $D$. Table~\ref{tablestack} lists the case corresponding to each of the nine stackings.

\begin{table}[!htbp]
\centering
\begin{tabular}{|c|c|c|c|}
\hline
$\hstack$ & $X(u,v)$ & $H(u,v)$ & $V(u,v)$\\
\hline
$X(u,v)$ & $A$ & $A$ & $C$\\
\hline
$H(u,v)$ & $A$ & $A$ & $B$\\
\hline
$V(u,v)$ & $C$ & $B$ & $D$\\
\hline
\end{tabular}
\vspace{0.3cm}
\caption{The nine possible horizontal stackings.}
\label{tablestack}
\end{table}
\vspace{-0.4cm}
The $D$-cases cannot occur, since such a product always creates a closed component. The four $A$-cases are covered by Proposition~\ref{proptanglesets}, because both factors belong to $\mathcal T_{\subhstack}$. The two $B$-cases are equivalent by rotation and follow essentially from the equalities in Figure~\ref{figthmproofBcases}.

\begin{figure}[!htbp]
\centering
\includegraphics{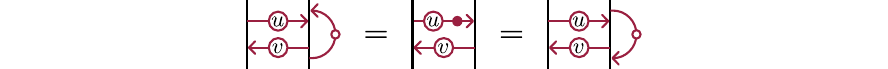}
\caption{Equalities used to prove the $B$-cases.}
\label{figthmproofBcases}
\end{figure}

The first equality follows from Tails Exchanges together with Isolated arrow when $u=0$, an arrow~wink-move when $u=\pm1$, and Lemma~\ref{lemarrowsquareenclosed} when $u=2$. The second is proved in the same way, with an additional application of Lemma~\ref{lemarrowcommu} to exchange the \emph{$u$-} and \emph{$v$-factors}.

The two $C$-cases are likewise equivalent by rotation. Their proof combines the equalities of Figure~\ref{figthmproofBcases} with those of Figure~\ref{figthmproofCcases}.

\begin{figure}[!htbp]
\centering
\includegraphics{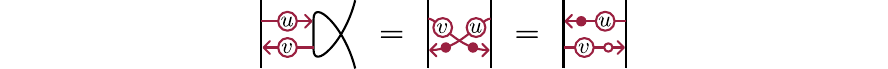}
\caption{Equalities used to prove the $C$-cases.}
\label{figthmproofCcases}
\end{figure}

The first equality follows from the Arrow isotopy, Head/Tail Reversal, and Tails Exchange moves. A similar version of the second equality appears in Figure~\ref{figproptantglesetintersection} and was discussed in the proof of Proposition~\ref{proptantglesetintersection}.
\end{proof}

\begin{cor}\label{coroalgknotwinktriv}
Knots obtained as closures of algebraic virtual tangles are trivial up to wink-equivalence.
\end{cor}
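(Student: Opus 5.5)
The plan is to reduce the corollary to Theorem~\ref{thmtangleset} and then check the finitely many closures by hand. Let $K$ be a knot obtained as the vertical or horizontal closure of an algebraic virtual tangle $T$. Since $K$ has a single component, $T$ has no closed components. Hence $T$ defines an element of $\mathcal{T}$, and by Theorem~\ref{thmtangleset} it is wink-equivalent to one of $X(u,v)$, $H(u,v)$ or $V(u,v)$ with $u,v\in\{0,1,2,3\}$. Wink-equivalence is generated by local moves in the interior of the square, so it commutes with both closure operations. Also, rotation by $90^\circ$ exchanges vertical and horizontal closure and preserves the family (it fixes the $X$'s and swaps $H$ with $V$). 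It therefore suffices to treat, say, the vertical closure of each of the $48$ tangles, and to keep only those whose closure has one component.

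Connectivity cuts the list down. The vertical closure of $V(u,v)=R^u\vstack L^v$ joins top-left to bottom-left and top-right to bottom-right through the caps, giving two components. So no $V$ tangle has a knot as its vertical closure. By contrast, $X(u,v)$ (the diagonal connection) and $H(u,v)$ (the horizontal connection) both have one-component vertical closures. There are thus two families to handle: the vertical closures of $X(u,v)$ and of $H(u,v)$.

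The reduction to the unknot will be done in arrow calculus, using the $w$-arrow pictures of $T$, $D$, $R$, $L$ from Figures~\ref{figproptantglesetintersection} and~\ref{figthmproofBcases}.

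\textbf{The $H$ family.} For $H(u,v)=T^u\hstack D^v$, the vertical closure caps off the two horizontal strands. Each $w$-arrow of the $u$-factor and the $v$-factor then has both endpoints on a single trivial strand of an unknot diagram. My plan is:
\begin{itemize}
\item use Tails Exchange and Slide moves to isotope the heads and tails of each factor past the caps until the arrows become parallel;
\item apply the computation of Figure~\ref{figthmproofBcases}, or more directly Isolated arrow, since on a one-component trivial diagram an arrow whose head and tail are adjacent with nothing in between is removable.
\end{itemize}
Alternatively, I could note that the vertical closure of $T^u\hstack D^v$ is the connected sum of the closures of $T^u$ and $D^v$. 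Each of these is a closure of a horizontal twist with a single virtual crossing, so it should reduce to the unknot by virtual and classical Reidemeister~I moves together with Lemma~\ref{lemarroworder4}.

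\textbf{The $X$ family.} For $X(u,v)$, Proposition~\ref{proptantglesetintersection} lets me rewrite it as $R^u\vstack L^v\vstack\rho$. After vertical closure, the virtual crossing $\rho$ is removed by a virtual Reidemeister~I move once the $R$ and $L$ factors are pushed around the closure. What remains is a single circle carrying the $w$-arrows of $u$ copies of $R$ and $v$ copies of $L$. The key step is that, after closing, the arrows coming from $R$ and those coming from $L$ are on the same component. So the $R$-arrows can be moved by Slide and Tails Exchange so that they enclose or sit parallel to the $L$-arrows. Then the Arrow~wink-move, Lemma~\ref{lemarrowsquareenclosed} and Lemma~\ref{lemarrowcommu} allow them to be cancelled in pairs, and Lemma~\ref{lemarroworder4} handles multiples of four. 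An alternative route is to show that closure absorbs a single factor: the closure of $R\vstack S$ equals the closure of $S\vstack R$, which is conjugation along the circle. If in addition the closure of $R\vstack S\vstack\rho$ equals the closure of $L\vstack S\vstack\rho$ up to twist, this would let me inductively reduce $u$ and $v$ to $0$, leaving the closure of $\rho$, which is the unknot.

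\textbf{Main obstacle.} I expect the hard step to be this last reduction for the $X$ family with $u,v\in\{1,2,3\}$, where the closure is a single component. It has to be checked that the twists acquired from Arrow~wink-moves do not obstruct the cancellation. I would first try the conjugation argument, and if twists remain, check the small cases $u,v\le 3$ directly with the arrow moves.
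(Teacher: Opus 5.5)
Your reduction matches the paper's: pass to the normal forms of Theorem~\ref{thmtangleset}, use rotation to restrict to vertical closures, discard $V(u,v)$ (two components), and treat $H(u,v)$ and $X(u,v)$. For $H(u,v)=T^u\hstack D^v$, your appeal to Isolated arrow is also the paper's step, but you should state why it applies. All arrows are vertical chords between the two horizontal strands. After closure, their endpoints occur around the circle in the order $a_1,\dots,a_n,b_n,\dots,b_1$ (top strand left to right, then bottom strand right to left). So the arrows are nested, the innermost one has adjacent endpoints, and Isolated arrow removes them one at a time. No Slides, Tails Exchanges or Lemma~\ref{lemarroworder4} are needed.

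The genuine gap is the $X$ family, which you leave open. Rewriting $X(u,v)$ as $R^u\vstack L^v\vstack\rho$ is what blocks you. After vertical closure the endpoints appear as $l_1,\dots,l_n,r_1,\dots,r_n$ with chords $(l_i,r_i)$, so every two arrows interleave and none is isolated. Pushing factors around cannot empty a lobe at $\rho$, since each arrow has one endpoint in each lobe.

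Sliding $R$-arrows to enclose $L$-arrows and ``cancelling in pairs'' is not an argument. The conjugation identity you can actually prove is that the vertical closure of $R\vstack S\vstack\rho$ equals that of $S\vstack\rho\vstack R$, hence that of $S\vstack L\vstack\rho$. This only converts an $R$ into an $L$ and keeps $u+v$ fixed, so it gives no induction down to $u=v=0$.

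The missing observation is to keep the form $X(u,v)=T^u\hstack D^v\hstack\rho$ given in the theorem. Under vertical closure, the circle leaves $\rho$, runs around the right closure arc, and re-enters $\rho$ without meeting any arrow endpoint. So $\rho$ is an empty virtual kink, removed by a virtual Reidemeister~I move, and what remains is the vertical closure of $H(u,v)$, handled as above. That is the paper's whole proof: one Reidemeister~I move plus Isolated arrow.

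Your conjugation idea can also be completed. Iterating it yields the closure of $L^{u+v}\vstack\rho$, where all arrows have the same orientation, so all tails are consecutive along the circle. Tails Exchanges reverse their order, the arrows become nested, and Isolated arrow finishes.
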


\begin{proof}
By Theorem~\ref{thmtangleset}, only finitely many tangles need to be considered, namely the closures of the tangles $X(u,v)$, $H(u,v)$ and $V(u,v)$ that are knots. By rotation, it is enough to consider the vertical closures of $X(u,v)$ and $H(u,v)$.

Each of these closures is wink-equivalent to the unknot by Isolated arrow and a Reidemeister~I move.
\end{proof}

\bibliographystyle{alpha}
\bibliography{bibli4eq.bib}
\addcontentsline{toc}{chapter}{Bibliography}

\end{document}